\numberwithin{figure}{section}
\theoremstyle{plain}
\newtheorem{thm}{Theorem}[section]
\newtheorem{lem}[thm]{Lemma}
\theoremstyle{definition}
\theoremstyle{remark}
\title[On Ricci Solitons]{Triviality Results and Conjugate Radius Estimation of Ricci Solitons}
\author[A. A. Shaikh, P. Mandal and V. A. Babu]{Absos Ali Shaikh$^{*1}$, Prosenjit Mandal$^{2}$ and V. Amarendra Babu$^{3}$}
\address{$^{1,2}$Department of Mathematics,\newline The University of Burdwan, Golapbag,\newline Burdwan-713104,\newline West Bengal, India.}
\address{$^{3}$Department of Mathematics,\newline Acharya Nagarjuna University, Nagarjuna Nagar,\newline Guntur, Andhra Pradesh, India.}
\email{$^1$aask2003@yahoo.co.in, aashaikh@math.buruniv.ac.in}
\email{$^2$prosenjitmandal235@gmail.com}
\email{$^3$amarendrab4@gmail.com}
\begin{document}
\begin{abstract}
The investigation of Ricci solitons is the focus of this work. We have proved triviality results for compact gradient Ricci soliton under certain restriction. Later, a rigidity result is derived for a compact gradient shrinking Ricci soliton. Also, we have estimated the conjugate radius for non-compact gradient shrinking Ricci solitons with superharmonic potential. Moreover, an upper bound for the conjugate radius of Ricci soliton with concircular potential vector field is determined. Finally, it is proved that a non-compact gradient Ricci soliton with a pole and non-negative Ricci curvature is non-shrinking.
\end{abstract}
\noindent\footnotetext{
$^*$ Corresponding author.\\
$\mathbf{2020}$\hspace{5pt}Mathematics\; Subject\; Classification: 53C20; 53C25; 53E20.\\ 
{Key words and phrases: Ricci soliton; scalar curvature; superharmonic function; conjugate radius; Riemannian manifold.}} 
\maketitle

\section{Introduction and Results}
In the mid 1980's Hamilton \cite{hamilton1, hamilton2} introduced the concept of Ricci flow by the governing equation: 
$$\frac{\partial g}{\partial t} =-2Ric.$$ 
The study of Ricci solitons is a crucial part in the treatment of Ricci flow. Ricci solitons, as self similar solutions to Hamilton's Ricci flow, are natural generalizations of Einstein  metrics. They play an important role in the singularity analysis of the Ricci flow (see \cite{hamilton} for more details). In the past decade, a significant number of results have been obtained in classifying or understanding the geometry of Ricci solitons.
For a study of Ricci flow and Ricci solitons, we refer the reader to \cite{Cao2006, CK04, Chow-B, cunha2023, CA20, Absos23} and the references therein.

An $n$-dimensional Riemannian manifold $(M, g)$ with Ricci tensor $Ric$ is called a Ricci soliton if the Lie derivative  $\mathcal{L}_Xg$ along a smooth vector field $X$ satisfies
 \begin{equation}\label{Ricci soliton}
\frac{1}{2}\mathcal{L}_Xg + {\rm Ric} = \lambda g,
 \end{equation}
where $\lambda$ is a real constant.

The Ricci soliton is referred to as a gradient Ricci soliton if $X$ is the gradient of a smooth function $f$ on $M$. Thus, (\ref{Ricci soliton}) implies
\begin{equation}\label{Grad Ricci soliton}
  \nabla^2f + Ric = \lambda g,
 \end{equation}
where $\nabla^2 f$ represents the Hessian of $f$. Also, for $\lambda < 0$ (resp.,  $\lambda = 0$ or $\lambda > 0$), a Ricci soliton is called expanding (resp., steady or shrinking). Moreover, when the vector field $X$ is Killing or $f$ is a constant, then the Ricci soliton is called trivial.
Throughout the paper we have assumed $M$ as a complete, connected and orientable Riemannian manifold without boundary.

For any fixed $x\in M$ and $v\in T_x M$, there exists a unique geodesic $\sigma_v$ such that $\sigma_v (0)=x$ and $\sigma'_v(0)=v$, then the exponential map is defined as follows:
\begin{equation*}
exp_x:T_x M \rightarrow M, \text{ $exp_x(v)$}=\sigma_v(1), v\in T_x M.
\end{equation*}
Now, we recall the definition of the injectivity radius $r_{inj}(M)$ and the conjugate radius $r_{conj}(M)$ of $M$ which are defined respectively as follows (see, \cite{Zhu2022}):
\begin{equation*}
r_{inj}(M)=\inf_{x\in M}\sup_{r}\{r \text{ }| \text{ }exp:B(x,r) \rightarrow exp (B(x,r)) \text{ is a diffeomorphism} \},
\end{equation*}
and
\begin{equation*}
r_{conj}(M)=\inf_{x\in M}\sup_{r}\{r \text{ }| \text{ }exp:B(x,r) \rightarrow exp (B(x,r)), \text{ $B(x,r)$ contains no critical points of $exp$}\},
\end{equation*}
where $B(x,r)$ represents the open ball with center at $x$ and radius $r>0$.

It is well-known that $M$ is said to be a manifold with a pole at $p$ if $exp_p:T_p M \rightarrow M$ is a diffeomorphism. 
Also, we have $r_{inj}(M)\leq r_{conj}(M),$ based on the definition of injectivity radius and conjugate radius of $M$.

Zhu \cite{Zhu2022} studied conjugate and injectivity radius on complete, non-compact Riemannian manifolds and obtained an integral condition for the scalar curvature along with the determination of an upper bound of the conjugate radius. In \cite{Green}, Green proved the maximal conjugate radius theorem, which states that if $M$ is a compact Riemannian manifold without boundary, and its scalar curvature $R$ is bounded below by $ n(n-1)$, then $r_{conj}(M)\leq \pi$, and further, $M$ is isometric to the Euclidean sphere $\mathbb{S}^n$ if and only if the equality holds. 

If $m$ is an integer with $0\leq m\leq n$ and $\alpha$ is a scalar, then the $m$-th invariant of $\nabla^2 f$, is denoted by $S_m(f)$ and is defined by the condition (see, \cite{Robert1974} and \cite{Robert1977}, p. 461)
$$det(I+\alpha \nabla^2 f)=S_0(f)+\alpha S_1(f)+\cdots +\alpha^m S_m(f).$$

We recall that Hamilton conjectured in \cite{hamilton2, hamilton} that a compact gradient Ricci soliton  with positive curvature operator is an Einstein manifold (a trivial Ricci soliton),  which is settled in \cite{Bohm2008}. The next important question is to find the conditions, under which a compact gradient Ricci soliton becomes an Einstein manifold. Cao \cite{Cao2006} proved that every steady or expanding compact Ricci soliton is trivial. It is known that compact shrinking Ricci solitons are not necessarily trivial. In this context Chen and Deshmukh \cite{DC2014} studied compact shrinking Ricci soliton and obtained some necessary and sufficient conditions to become the solitons trivial. Now, generally a natural question arises,``Is it possible to impose some other conditions on compact shrinking Ricci soliton so that it become trivial ?". We have given an affirmative answer to this question in the following theorem:

\begin{thm}\label{Th6}
If $(M, g,  f)$ is a compact gradient shrinking Ricci soliton with constant $S_2(f)$, then the soliton is trivial, i.e., $M$ is Einstein.
\end{thm}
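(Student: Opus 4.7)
My plan is to translate the hypothesis that $S_2(f)$ is constant into a pointwise algebraic relation among $R$, $|{\rm Ric}|^2$, and $\lambda$, and then combine this with the standard integral identities on a compact shrinking gradient Ricci soliton to force the soliton to be Einstein. From the definition of $S_2(f)$ via the characteristic polynomial one has $2 S_2(f) = (\Delta f)^2 - |\nabla^2 f|^2$. Substituting the soliton equation $\nabla^2 f = \lambda g - {\rm Ric}$ together with its trace $\Delta f = n\lambda - R$, a direct expansion gives
\[
2 S_2(f) = n(n-1)\lambda^2 - 2(n-1)\lambda R + R^2 - |{\rm Ric}|^2,
\]
and completing the square while using the algebraic inequality $|{\rm Ric}|^2 \geq R^2/n$ produces the key pointwise relation
\[
\frac{n-1}{n}(R - n\lambda)^2 - 2 S_2(f) \;=\; |{\rm Ric}|^2 - \frac{R^2}{n} \;\geq\; 0.
\]

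Writing $S_2(f) = c$, this says $(R - n\lambda)^2 \geq 2nc/(n-1)$ pointwise, so I next need to pin down the sign of $c$. The second Bianchi identity combined with the soliton equation gives the standard formula $\nabla R = 2\,{\rm Ric}(\nabla f)$, so integration by parts on compact $M$ yields
\[
\int_M (\Delta f)^2\,dV \;=\; \int_M \langle \nabla f, \nabla R\rangle\,dV \;=\; 2\int_M {\rm Ric}(\nabla f, \nabla f)\,dV.
\]
Integrating the Bochner formula for $|\nabla f|^2$ over compact $M$ gives $\int_M {\rm Ric}(\nabla f, \nabla f)\,dV = \int_M[(\Delta f)^2 - |\nabla^2 f|^2]\,dV = 2c\,{\rm Vol}(M)$, and combining the two displays yields $\int_M (\Delta f)^2\,dV = 4c\,{\rm Vol}(M)$, which in particular forces $c \geq 0$.

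If $c > 0$, the pointwise bound implies $|R - n\lambda| \geq \sqrt{2nc/(n-1)} > 0$ on the connected manifold $M$, so by continuity $R - n\lambda$ must have constant sign throughout $M$. However, integrating the trace identity $\Delta f = n\lambda - R$ over compact $M$ gives the well-known mean value $\int_M R\,dV = n\lambda\,{\rm Vol}(M)$, which contradicts $R$ being strictly above or strictly below $n\lambda$ everywhere. Hence $c = 0$, whence $\int_M (\Delta f)^2\,dV = 0$, so $\Delta f \equiv 0$, and on compact $M$ this forces $f$ to be constant; the soliton equation then collapses to ${\rm Ric} = \lambda g$, so $M$ is Einstein. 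I expect the only step that is not pure calculation to be this pointwise-to-global argument, where connectedness and the mean value of $R$ together exclude the case $c > 0$.
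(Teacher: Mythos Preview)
Your proof is correct, but it follows a genuinely different route from the paper's. The paper invokes Reilly's lemma (Lemma~\ref{Robert2}) which asserts that on a compact manifold $S_2(f)$ cannot be a nonzero constant; once $S_2(f)\equiv 0$, Reilly's integral identity $\int_M 2S_2(f)=\int_M {\rm Ric}(\nabla f,\nabla f)$ immediately gives $\int_M {\rm Ric}(\nabla f,\nabla f)=0$, and then the same chain $\int_M\langle\nabla f,\nabla R\rangle=0\Rightarrow\int_M R\,\Delta f=0\Rightarrow\int_M(\Delta f)^2=0$ that you also reach. By contrast, you avoid Reilly's lemma entirely: you derive the pointwise identity $\frac{n-1}{n}(R-n\lambda)^2-2S_2(f)=|{\rm Ric}|^2-\tfrac{R^2}{n}\geq 0$, use the Bochner and soliton identities to obtain $\int_M(\Delta f)^2=4c\,{\rm Vol}(M)$ (so $c\geq 0$), and then rule out $c>0$ by the connectedness/mean-value argument on $R-n\lambda$. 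Your approach is longer but self-contained and yields the extra pointwise information that $S_2(f)$ measures exactly the gap between $\frac{n-1}{n}(\Delta f)^2$ and the squared norm of the traceless Ricci tensor; the paper's approach is shorter but imports the nontrivial external input of Reilly's constancy lemma. Note also that neither argument actually uses the shrinking hypothesis $\lambda>0$, which is why the paper records the same conclusion separately as Theorem~\ref{Th5} for arbitrary $\lambda$.
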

In the following theorem, we have given an application of the Bochner formula for gradient Ricci solitons and obtained a triviality result:

\begin{thm}\label{Th5} 
If $(M, g,  f)$ is a compact gradient Ricci soliton with constant $S_2(f)$, then there does not exist any non-trivial gradient Ricci soliton.
\end{thm}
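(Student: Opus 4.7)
The plan is to extract enough information from the algebraic identity
\begin{equation*}
2S_{2}(f) \;=\; (\Delta f)^{2} - |\nabla^{2}f|^{2},
\end{equation*}
which follows from $S_{2}(f)$ being the second elementary symmetric polynomial in the eigenvalues of $\nabla^{2}f$, to force $f$ to be constant, so that (\ref{Grad Ricci soliton}) reduces to $Ric=\lambda g$. Coupling this identity with the Cauchy--Schwarz bound $|\nabla^{2}f|^{2}\ge (\Delta f)^{2}/n$ on the Hessian, and writing $S_{2}(f)\equiv k$, yields the pointwise lower bound $(\Delta f)^{2}\ge 2nk/(n-1)$. If $k>0$, then $\Delta f$ vanishes nowhere on $M$; being continuous on the connected manifold $M$, it must have constant sign, which contradicts the divergence-theorem identity $\int_{M}\Delta f\,d\mu = 0$. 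This forces $k\le 0$.

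For the reverse inequality, I would integrate the same identity. Tracing (\ref{Grad Ricci soliton}) gives $\Delta f = n\lambda - R$, and the contracted second Bianchi identity applied to (\ref{Grad Ricci soliton}) produces the standard soliton identity $\tfrac{1}{2}\nabla R = Ric(\nabla f,\cdot)$. Feeding these into Bochner's formula and integrating over the compact $M$ gives
\begin{equation*}
\int_{M}|\nabla^{2}f|^{2}\,d\mu \;=\; \int_{M}Ric(\nabla f,\nabla f)\,d\mu \;=\; \tfrac{1}{2}\int_{M}R^{2}\,d\mu - \tfrac{n^{2}\lambda^{2}}{2}\,\mathrm{Vol}(M),
\end{equation*}
where the last equality uses $\int_{M}R\,d\mu = n\lambda\,\mathrm{Vol}(M)$. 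A routine calculation then reduces $\int_{M}2S_{2}(f)\,d\mu$ to $\tfrac{1}{2}\bigl(\int_{M}R^{2}\,d\mu - n^{2}\lambda^{2}\mathrm{Vol}(M)\bigr)$, which is non-negative by Cauchy--Schwarz applied to $R$, so $k\ge 0$. Together with the previous step, $k=0$.

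Finally, $k=0$ forces equality in the Cauchy--Schwarz inequality for $R$, so $R$ is constant; combined with $\int_{M}R\,d\mu = n\lambda\,\mathrm{Vol}(M)$ this gives $R\equiv n\lambda$. Hence $\Delta f\equiv 0$, so $f$ is harmonic on the compact manifold $M$ and therefore constant, and (\ref{Grad Ricci soliton}) reduces to $Ric=\lambda g$, i.e., the Einstein (trivial) case. Notably, the argument is insensitive to the sign of $\lambda$ and so applies uniformly to shrinking, steady, and expanding solitons. The main obstacle I expect is executing the integration-by-parts chain cleanly so that $\int_{M}S_{2}(f)\,d\mu$ is expressed purely in terms of $R$ and $\lambda$; once the standard soliton identities are set up this is routine, and the pointwise sign argument via connectedness does the rest.
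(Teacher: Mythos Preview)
Your argument is correct, but it takes a different route from the paper's. The paper quotes two results of Reilly as black boxes: Lemma~\ref{Robert2} (on any compact manifold, $S_{2}(f)$ cannot be a nonzero constant) immediately gives $S_{2}(f)\equiv 0$, and Lemma~\ref{Robert1} ($\int_{M}2S_{2}(f)=\int_{M}Ric(\nabla f,\nabla f)$) then gives $\int_{M}Ric(\nabla f,\nabla f)=0$; plugging this into the integrated Bochner identity $\tfrac{1}{2}\Delta|\nabla f|^{2}=|\nabla^{2}f|^{2}-Ric(\nabla f,\nabla f)$ (which uses the soliton relations (\ref{eq9})--(\ref{eq10})) yields $\int_{M}|\nabla^{2}f|^{2}=0$, hence $\nabla^{2}f=0$ and $f$ is constant.

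By contrast, you avoid citing Reilly and instead rederive everything from scratch. Your integrated Bochner step $\int_{M}|\nabla^{2}f|^{2}=\int_{M}Ric(\nabla f,\nabla f)$ is exactly the paper's identity (\ref{eq13}) integrated, and combined with the pointwise formula $2S_{2}(f)=(\Delta f)^{2}-|\nabla^{2}f|^{2}$ it reproduces Reilly's Lemma~\ref{Robert1}. Where you genuinely diverge is in replacing Lemma~\ref{Robert2}: you obtain $k\le 0$ by a pointwise Cauchy--Schwarz/connectedness argument, and $k\ge 0$ by the soliton-specific computation $\int_{M}2S_{2}(f)=\tfrac{1}{2}\bigl(\int_{M}R^{2}-n^{2}\lambda^{2}\mathrm{Vol}(M)\bigr)\ge 0$. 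This makes your proof self-contained but longer, and your $k\ge 0$ step uses the soliton equation whereas Reilly's lemma does not. The paper's route is shorter; yours has the merit of not importing external machinery and of exhibiting the equality case (constant $R$) explicitly along the way.
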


If a gradient Ricci soliton is isometric to a quotient of $N\times\mathbb{R}^k$, it is said to be rigid (for more details about rigidity, see \cite{petersen1} and the references therein), where $N$ is an Einstein manifold. Petersen and Wylie \cite{petersen1}, studied gradient Ricci soliton and determined certain conditions, under which a gradient Ricci soliton becomes rigid. It is known that for compact Riemannian manifolds every steady or expanding Ricci soliton is rigid \cite{ivey}. Moreover, Hamilton  \cite{hamilton2} and Ivey \cite{ivey} proved that all shrinking compact Ricci solitons are rigid for the dimensions 2 and 3 respectively. Eminenti et al. \cite{Eminenti} proved that any $n$-dimensional compact shrinking Ricci soliton with constant scalar curvature is rigid. In \cite{petersen1} Petersen and Wylie obtained a little more general result and proved the following:
\begin{thm}[\cite{petersen1}]\label{Th8}
A compact gradient shrinking Ricci soliton is rigid if
\begin{equation*}
\int_{M} Ric( \nabla f, \nabla f )\leq 0.
\end{equation*}
\end{thm}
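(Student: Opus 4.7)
The plan is to apply the Bochner formula to the potential function $f$ and combine it with the standard soliton identities so as to convert the hypothesis into an integral inequality of the form $\int_M |\nabla^2 f|^2 \le 0$, which then forces $\nabla^2 f \equiv 0$.

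First I would write the Bochner formula for $|\nabla f|^2$, producing the three terms $|\nabla^2 f|^2$, $\langle \nabla \Delta f, \nabla f\rangle$ and $Ric(\nabla f, \nabla f)$. Then I would substitute the two standard identities available on any gradient Ricci soliton: tracing (\ref{Grad Ricci soliton}) gives $\Delta f = n\lambda - R$, and differentiating the well-known identity $R+|\nabla f|^2 - 2\lambda f = \text{const}$ yields $\nabla R = 2\, Ric(\nabla f, \cdot)$. Together these produce the pointwise identity
\begin{equation*}
\frac{1}{2}\Delta |\nabla f|^2 = |\nabla^2 f|^2 - Ric(\nabla f, \nabla f).
\end{equation*}

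Since $M$ is compact without boundary, integrating kills the left-hand side, so $\int_M |\nabla^2 f|^2 = \int_M Ric(\nabla f, \nabla f) \le 0$ by hypothesis. Hence $\nabla^2 f \equiv 0$ on $M$. In particular $\Delta f = 0$, so $f$ is harmonic on a compact manifold and therefore constant. Feeding this back into (\ref{Grad Ricci soliton}) gives $Ric = \lambda g$, i.e., $M$ is Einstein, which is exactly the $k=0$ case of the rigidity definition stated just before the theorem.

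The main obstacle is nothing deep; what really needs care is the sign bookkeeping, so that $\langle \nabla \Delta f, \nabla f\rangle$ enters the integrated Bochner formula as $-2\, Ric(\nabla f, \nabla f)$ with the correct sign --- a flipped sign would reverse the inequality and ruin the argument. No curvature assumptions, spectral estimates, or cutoff techniques are needed, since compactness makes the integration by parts immediate and the hypothesis alone drives the pointwise vanishing of $\nabla^2 f$.
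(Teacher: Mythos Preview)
Your argument is correct, but note that the paper does \emph{not} supply its own proof of this theorem: it is quoted verbatim as a result of Petersen and Wylie \cite{petersen1} and used as a black box in the proof of Theorem~\ref{Th10}. So there is no in-paper proof to compare against.

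That said, the Bochner computation you carry out is exactly the identity the paper derives in its proof of Theorem~\ref{Th5}, namely
\[
\tfrac{1}{2}\Delta|\nabla f|^2 = |\nabla^2 f|^2 - Ric(\nabla f,\nabla f),
\]
obtained from (\ref{eq9}) and (\ref{eq10}). Integrating this over the compact $M$ and invoking the hypothesis $\int_M Ric(\nabla f,\nabla f)\le 0$ gives $\int_M|\nabla^2 f|^2\le 0$, hence $\nabla^2 f\equiv 0$ and $f$ is constant, so $Ric=\lambda g$. This yields the trivial (Einstein, $k=0$) case of rigidity, which is indeed all that can occur here since a compact manifold splitting as $N\times\mathbb{R}^k$ forces $k=0$. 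Your sign bookkeeping is fine: $\langle\nabla\Delta f,\nabla f\rangle=-\langle\nabla R,\nabla f\rangle=-2\,Ric(\nabla f,\nabla f)$, matching the paper's (\ref{eq13}).
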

In the following theorem, we have proved a rigidity result for compact gradient shrinking Ricci soliton with some condition different from Petersen and Wylie \cite{petersen1}. In particular, we have proved the following theorem: 
\begin{thm}\label{Th10}
A compact gradient shrinking Ricci soliton $(M, g,  f)$ with constant $S_2(f)$ is rigid. 
\end{thm}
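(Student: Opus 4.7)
The plan is to derive Theorem \ref{Th10} as an almost immediate consequence of Theorem \ref{Th6}. Indeed, Theorem \ref{Th6} asserts, under precisely the hypotheses of Theorem \ref{Th10} (compact, gradient shrinking, constant $S_2(f)$), that the soliton is trivial: the potential $f$ is constant on $M$ and $(M,g)$ is Einstein. So the strategy is simply to invoke Theorem \ref{Th6} and then translate its triviality conclusion into the rigidity language of the excerpt.

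Once $f$ is constant I would establish rigidity in either of two equivalent ways. First, by the definition recalled just above Theorem \ref{Th8}, a gradient Ricci soliton is rigid if it is isometric to a quotient of $N \times \mathbb{R}^{k}$ with $N$ an Einstein manifold; taking $k = 0$, any Einstein manifold is itself rigid in this sense, which matches the desired conclusion. Alternatively, since $\nabla f \equiv 0$ trivially yields
\begin{equation*}
\int_{M} Ric(\nabla f, \nabla f) \,=\, 0 \,\le\, 0,
\end{equation*}
the Petersen--Wylie criterion recorded as Theorem \ref{Th8} applies directly and produces rigidity.

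Because Theorem \ref{Th6} already carries all the analytic content of the argument, I do not anticipate any serious obstacle in proving Theorem \ref{Th10}. The one point requiring a moment of care is the bookkeeping: one must check that the ``triviality'' conclusion of Theorem \ref{Th6} ($f$ constant together with $(M,g)$ Einstein) genuinely implies the ``rigidity'' conclusion in the sense of the definition given in the excerpt, which, as observed above, amounts to allowing $k = 0$ in the product $N \times \mathbb{R}^{k}$. Thus the entire argument reduces to citing Theorem \ref{Th6} followed by this one-line reconciliation (or, equivalently, by a one-line appeal to Theorem \ref{Th8}).
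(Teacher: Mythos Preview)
Your approach is correct and, at bottom, relies on the same ingredients as the paper's proof. The paper proceeds slightly more directly: it invokes Lemmas~\ref{Robert1} and~\ref{Robert2} (Reilly's lemmas) to obtain $\int_{M} Ric(\nabla f,\nabla f)=0$ and then applies the Petersen--Wylie criterion (Theorem~\ref{Th8}) immediately, without passing through the full conclusion of Theorem~\ref{Th6}. Your route---cite Theorem~\ref{Th6} to get $f$ constant, then read off rigidity either from the definition (with $k=0$) or via Theorem~\ref{Th8}---is logically sound but uses strictly more than is needed, since Theorem~\ref{Th6} itself begins with those same Reilly lemmas and then does additional work to force $\Delta f=0$. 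In effect the paper stops at the intermediate identity $\int_{M} Ric(\nabla f,\nabla f)=0$ that already appears inside the proof of Theorem~\ref{Th6}, while you carry that proof to its end before turning to rigidity. Either way the analytic content is identical.
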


In the following three theorems, we have estimated upper bound for the conjugate radius of a Ricci soliton:
\begin{thm}\label{Th4}
Let $(M,g,f)$ be a non-compact gradient shrinking Ricci soliton with $Ric\geq 0$. If $f$ is superharmonic, then the conjugate radius of $M$ is given by
\begin{equation*}
r_{conj}(M)\leq \pi \sqrt{\frac{n-1}{\lambda}}.
\end{equation*}
\end{thm}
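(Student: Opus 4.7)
The plan is to use the superharmonicity of $f$ together with the trace of the soliton equation to produce a uniform pointwise lower bound on the scalar curvature, and then feed this bound into a Green-type maximal conjugate radius estimate for complete non-compact manifolds.

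First, I would take the trace of the gradient shrinking Ricci soliton equation \eqref{Grad Ricci soliton}, giving the standard identity $\Delta f + R = n\lambda$. Since $f$ is assumed superharmonic, $\Delta f \leq 0$, and because the soliton is shrinking ($\lambda>0$) this immediately yields the pointwise estimate
\[
R \;\geq\; n\lambda \;=\; n(n-1)\cdot\frac{\lambda}{n-1}
\]
on all of $M$. Setting $K := \lambda/(n-1) > 0$ this reads $R \geq n(n-1)K$, which is the hypothesis of Green's maximal conjugate radius theorem.

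Next, I would invoke the non-compact extension of Green's maximal conjugate radius theorem in the form of Zhu \cite{Zhu2022}. The pointwise bound $R \geq n(n-1)K$ a fortiori satisfies the integral scalar curvature hypothesis used there, so the theorem applies and produces
\[
r_{conj}(M) \;\leq\; \pi/\sqrt{K} \;=\; \pi\sqrt{(n-1)/\lambda},
\]
which is the claim. In this step the standing hypothesis $Ric\geq 0$ plays a supporting but essential role: it yields Bishop--Gromov volume comparison and keeps $Ric(\gamma',\gamma')$ non-negative along every geodesic, which is exactly what permits the unit-tangent-bundle / index-form averaging underlying Zhu's theorem to be localized to a compact exhaustion of $M$ without losing information at infinity.

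The trace identity in the first step is a one-line calculation, so the main obstacle sits in the second: Green's original 1963 theorem is formulated for compact manifolds, and one must verify that Zhu's non-compact extension genuinely applies under the present hypotheses. This is precisely where the two assumptions interact: the superharmonicity of $f$ delivers the pointwise scalar curvature bound that triggers the theorem with the sharp constant $K = \lambda/(n-1)$, while $Ric\geq 0$ supplies the regularity and volume control at infinity needed to run the non-compact averaging argument and conclude the stated bound on $r_{conj}(M)$.
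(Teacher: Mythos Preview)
Your proposal is correct and essentially matches the paper's proof: both use the trace identity $R + \Delta f = n\lambda$ together with superharmonicity to bound the scalar curvature from below, and then feed this into Zhu's integral inequality (Lemma~\ref{Zhup1.4}) combined with Bishop--Gromov volume comparison under $Ric \geq 0$. The only cosmetic difference is that you first extract the pointwise bound $R \geq n\lambda$ and then invoke Zhu's result as a package, whereas the paper substitutes $R = n\lambda - \Delta f$ directly inside the integral and drops the $-\Delta f$ term there before passing to the limit $r\to\infty$; the logic is identical.
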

\begin{thm}\label{Th2}
Let $(M,g,X)$ be an $n(\geq 3)$-dimensional Ricci soliton with the potential vector field $X$ as concircular. Then the conjugate radius of $M$ is given by
\begin{equation*}
r_{conj}(M)\leq \pi\sqrt{\frac{n-1}{\lambda-\phi}},
\end{equation*}
where $(\lambda-\phi)$ is positive.
\end{thm}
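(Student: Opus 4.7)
The plan is to show that the concircular hypothesis collapses the soliton equation to an Einstein condition with a positive constant Ricci eigenvalue, and then to apply a classical Myers/Bonnet-type conjugate radius estimate on a manifold with a positive Ricci lower bound.

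First, I would unpack the definition: a concircular vector field $X$ satisfies $\nabla_Y X = \phi Y$ for every vector field $Y$, where $\phi$ is a smooth function on $M$. A one-line computation then yields
\begin{equation*}
(\mathcal{L}_X g)(Y,Z) = g(\nabla_Y X, Z) + g(Y, \nabla_Z X) = 2\phi\, g(Y,Z),
\end{equation*}
so substituting into the soliton equation \eqref{Ricci soliton} reduces it pointwise to
\begin{equation*}
Ric = (\lambda - \phi)\, g.
\end{equation*}

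Next, since $n \geq 3$ and $M$ is connected, I would invoke Schur's lemma: any Riemannian manifold of dimension at least three satisfying $Ric = \mu\, g$ for some smooth function $\mu$ must have $\mu$ constant. Hence $\lambda - \phi$ is a genuine (positive, by hypothesis) constant $c$, so $M$ is Einstein. Writing $c = (n-1)K$ with $K = (\lambda-\phi)/(n-1)>0$, the bound $Ric \geq (n-1)K$ holds trivially (with equality).

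Finally, the classical comparison estimate applies: on a complete Riemannian manifold with $Ric \geq (n-1)K > 0$, a Jacobi field/index form argument shows that every geodesic of length greater than $\pi/\sqrt{K}$ carries conjugate points, which gives
\begin{equation*}
r_{\mathrm{conj}}(M) \leq \frac{\pi}{\sqrt{K}} = \pi\sqrt{\frac{n-1}{\lambda - \phi}}.
\end{equation*}
The only real subtlety is the Schur reduction: a priori $\phi$ is merely a smooth function, so $Ric = (\lambda-\phi) g$ looks like a quasi-Einstein rather than an Einstein condition, and the dimension hypothesis $n\geq 3$ is exactly what forces the coefficient to be constant. Once that is in hand, the stated bound is immediate from the maximal conjugate radius theorem, and the assumption that $\lambda - \phi$ is positive is what makes the right-hand side meaningful.
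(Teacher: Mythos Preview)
Your proof is correct and follows essentially the same route as the paper: both reduce the soliton equation to $Ric=(\lambda-\phi)g$ via the concircular condition, invoke Schur's lemma in dimension $n\geq 3$ to force $\lambda-\phi$ constant, and then extract the conjugate-radius bound from a positive Ricci lower bound. The only cosmetic difference is that the paper carries out the last step by feeding $Ric(\sigma',\sigma')=\lambda-\phi$ into the index-form inequality $\int_0^l \sin^2(\pi t/l)\,Ric(\sigma',\sigma')\,dt\leq (n-1)\pi^2/(2l)$ from \cite{Zhu2022}, whereas you cite the equivalent Myers/Bonnet estimate directly.
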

\begin{thm}\label{Th3}
Let $(M,g)$ be a complete Riemannian manifold satisfying $Ric + \nabla^2f \geq\lambda g$. If the acceleration of $f$ along a geodesic ray $\sigma (t)$ is less than or equal to some real number k, then the conjugate radius of $M$ is given by
\begin{equation*}
r_{conj}(M)\leq \pi\sqrt{\frac{n-1}{\lambda-k}},
\end{equation*}
where $(\lambda-k)$ is positive.
\end{thm}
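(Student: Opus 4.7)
The plan is to reduce the statement to a Myers-type second variation argument, after first repackaging the hypothesis on $f$ as a genuine Ricci lower bound along the relevant geodesic. Along a unit-speed geodesic $\sigma$ one has $\nabla_{\sigma'}\sigma'=0$, so I would first note
$$(f\circ\sigma)''(t)=\sigma'(\sigma'(f))=\nabla^2 f\bigl(\sigma'(t),\sigma'(t)\bigr),$$
i.e.\ the ``acceleration of $f$ along $\sigma$'' coincides with the Hessian of $f$ evaluated on the unit tangent. The hypothesis therefore reads $\nabla^2 f(\sigma',\sigma')\le k$. Plugging $\sigma'$ into the assumed inequality $\mathrm{Ric}+\nabla^2 f\ge\lambda g$ and subtracting gives
$$\mathrm{Ric}(\sigma',\sigma')\ge\lambda-\nabla^2 f(\sigma',\sigma')\ge\lambda-k>0$$
along $\sigma$. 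Thus the Ricci curvature is uniformly bounded below by the positive number $\lambda-k$ along every geodesic, even though $(M,g)$ need not have globally positive Ricci curvature.

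Next I would run the classical second-variation argument with this pointwise-along-$\sigma$ bound. Suppose, for contradiction, that a unit-speed geodesic $\sigma:[0,\ell]\to M$ with $\ell>\pi\sqrt{(n-1)/(\lambda-k)}$ carries no interior conjugate point. Choose a parallel orthonormal frame $E_1,\dots,E_{n-1}$ along $\sigma$ perpendicular to $\sigma'$ and set $V_i(t)=\sin(\pi t/\ell)\,E_i(t)$, so each $V_i$ vanishes at the endpoints. Summing the index form over $i$ and using the identity $\sum_{i=1}^{n-1}R(E_i,\sigma',E_i,\sigma')=\mathrm{Ric}(\sigma',\sigma')$ will give
$$\sum_{i=1}^{n-1} I(V_i,V_i)=\int_0^\ell\!\Bigl[(n-1)\tfrac{\pi^2}{\ell^2}\cos^2(\pi t/\ell)-\sin^2(\pi t/\ell)\,\mathrm{Ric}(\sigma',\sigma')\Bigr]\,dt.$$
Inserting the Ricci bound from the first step and using $\int_0^\ell\cos^2(\pi t/\ell)\,dt=\int_0^\ell\sin^2(\pi t/\ell)\,dt=\ell/2$, the right-hand side is at most $\frac{\ell}{2}\bigl[(n-1)\pi^2/\ell^2-(\lambda-k)\bigr]$, which is strictly negative by the choice of $\ell$. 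Hence at least one $I(V_i,V_i)<0$, contradicting the positivity of the index form along a conjugate-point-free segment. Therefore a conjugate point must appear on $(0,\ell)$ for every such $\sigma$, and passing to the infimum over $x\in M$ in the definition of $r_{\mathrm{conj}}(M)$ yields the claimed estimate $r_{\mathrm{conj}}(M)\le\pi\sqrt{(n-1)/(\lambda-k)}$.

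I expect the main obstacle to be essentially interpretational rather than technical: one must confirm that ``the acceleration of $f$ along a geodesic ray $\sigma$'' is meant in the coordinate-free sense $(f\circ\sigma)''$, and that the bound is understood to hold along every unit-speed geodesic that can participate in the computation of $r_{\mathrm{conj}}(M)$ (not just along one distinguished ray). Once this reading is in place, the Hessian identity above converts the hypothesis cleanly into a positive Ricci lower bound along $\sigma$, and the argument collapses to the standard Bonnet--Myers conjugate-point calculation with $\lambda-k$ playing the role usually played by the global Ricci lower bound; no additional analytic input is required.
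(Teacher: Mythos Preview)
Your argument is correct and is essentially the same as the paper's. The paper quotes the integral inequality
\[
\int_{0}^{l}\sin^2\!\left(\tfrac{\pi}{l}t\right)\mathrm{Ric}(\sigma',\sigma')\,dt\le \frac{(n-1)\pi^2}{2l}
\]
from \cite{Zhu2022} as a black box and then substitutes $\mathrm{Ric}(\sigma',\sigma')\ge\lambda-\nabla^2 f(\sigma',\sigma')\ge\lambda-k$; you instead re-derive that very inequality via the index form with the same sine test fields, so the two proofs coincide step for step once the cited inequality is unpacked.
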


\begin{thm}\label{Th1}
Let $(M,g,f)$ be a non-compact gradient Ricci soliton with a pole at $p$ and $Ric\geq 0$. If the potential function $f$ is at most linear along a geodesic ray $\sigma(t)$, with $\sigma(0)=p$, then the soliton is non-shrinking.
\end{thm}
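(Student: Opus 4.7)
The plan is to argue by contradiction: suppose $\lambda > 0$ (shrinking) and produce a contradiction with the at-most-linear growth of $f$ along $\sigma$. The first step is to evaluate the soliton equation (\ref{Grad Ricci soliton}) along $\sigma$. Writing $T(t)=\sigma'(t)$ and using $\nabla_T T = 0$, contracting with $T\otimes T$ gives
$$ (f\circ\sigma)''(t) + Ric(T,T)(t) = \lambda. $$
Integrating twice from $0$ to $t$ yields the identity
$$ f(\sigma(t)) = f(p) + (f\circ\sigma)'(0)\, t + \frac{\lambda t^2}{2} - \int_0^t (t-s)\, Ric(T,T)(s)\, ds. $$

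The second step exploits the pole hypothesis. Since $\exp_p$ is a diffeomorphism, $\sigma$ contains no conjugate points to $p$, so for every $L>0$ the index form is non-negative on normal variations vanishing at the endpoints. Applied to $V_i = \phi(s) E_i(s)$, where $\{E_i\}_{i=1}^{n-1}$ is a parallel orthonormal basis of the normal bundle of $\sigma|_{[0,L]}$ and $\phi$ is a Lipschitz test function with $\phi(0)=\phi(L)=0$, summing over $i$ gives
$$ \int_0^L \phi(s)^2\, Ric(T,T)(s)\, ds \leq (n-1)\int_0^L \phi'(s)^2\, ds. $$
I will take $\phi$ equal to $1$ on $[\epsilon, L-\epsilon]$ and linearly interpolated to $0$ on $[0,\epsilon]\cup[L-\epsilon,L]$. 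Since $Ric\geq 0$, this yields $\int_\epsilon^{L-\epsilon} Ric(T,T)\, ds \leq 2(n-1)/\epsilon$, and letting $L\to\infty$ produces the uniform bound
$$ \int_0^\infty Ric(T,T)(s)\, ds \leq C < \infty. $$

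Finally, setting $h(s)=\int_0^s Ric(T,T)(\tau)\, d\tau \leq C$, integration by parts gives $\int_0^t (t-s)\, Ric(T,T)(s)\, ds = \int_0^t h(s)\, ds \leq C t$. Substituting back into the identity for $f(\sigma(t))$ produces the lower bound
$$ f(\sigma(t)) \geq \frac{\lambda t^2}{2} - C t + O(1). $$
For $\lambda > 0$ this quadratic lower bound is incompatible with $f(\sigma(t))$ being at most linear in $t$, a contradiction. Hence $\lambda\leq 0$, and the soliton is non-shrinking. The main obstacle is the second step: converting the topological pole condition into a finite total Ricci curvature along $\sigma$ by choosing a cut-off $\phi$ whose derivative energy can be made uniformly small in $L$ while the mass term still captures the bulk of the Ricci integral, so that the quadratic term $\lambda t^2/2$ coming from the soliton equation is not swamped by the Ricci correction.
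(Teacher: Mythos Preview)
Your proof is correct but follows a different route from the paper's. The paper invokes Zhu's lemma, which states that under the pole and $Ric\geq 0$ hypotheses one has $\frac{1}{r}\int_0^r t^2\,Ric(\sigma',\sigma')\,dt \leq n-1$; substituting $Ric(\sigma',\sigma') = \lambda - (f\circ\sigma)''$ into this weighted inequality and integrating $\int_0^r t^2 (f\circ\sigma)''\,dt$ by parts twice, the paper then divides through by $r^2$ and sends $r\to\infty$, using the linear growth of $f\circ\sigma$ to kill the residual terms and isolate $\lambda/3 \leq 0$. You instead bypass Zhu's lemma and run the index form directly with a trapezoidal cutoff to extract the sharper qualitative fact $\int_0^\infty Ric(\sigma',\sigma')\,ds < \infty$; this forces the Ricci correction in the twice-integrated soliton identity to grow only linearly, so the $\tfrac{\lambda}{2} t^2$ term survives and yields a quadratic lower bound on $f\circ\sigma$. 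Your argument is self-contained (it re-derives the needed consequence of the pole condition rather than citing it) and arguably cleaner on one point: it uses only the one-sided upper bound $f(\sigma(t))\leq at+b$ and never needs to control $(f\circ\sigma)'(r)/r$ in the limit, a step the paper's approach must address. The paper's version is shorter once Zhu's lemma is taken as given.
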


\section{Proof of the results}
In this section, we have presented the proof for the results of the paper.
First we have stated the following lemmas, which will be used in the sequel:
\begin{lem}[\cite{Robert1977}]\label{Robert1}
Let $(M,g)$ be a compact Riemannian manifold. Then the following holds:
\begin{equation*}
\int_{M} 2 S_2(f)=\int_{M}Ric(\nabla f, \nabla f).
\end{equation*}
\end{lem}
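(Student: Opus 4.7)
The plan is to identify $2S_2(f)$ with a pointwise expression built from the Hessian of $f$, and then recognize the claimed equality as simply the Bochner formula integrated over the compact manifold $M$.

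First, I would expand the defining generating function for the invariants $S_m(f)$. Writing $A=\nabla^2 f$ and using the Newton identity (or a direct expansion of $\det(I+\alpha A)$ in the eigenvalues of $A$), the coefficient of $\alpha^2$ is the second elementary symmetric polynomial of the eigenvalues, which equals
\begin{equation*}
S_2(f) = \tfrac12\bigl[(\operatorname{tr}\nabla^2 f)^2 - \operatorname{tr}((\nabla^2 f)^2)\bigr] = \tfrac12\bigl[(\Delta f)^2 - |\nabla^2 f|^2\bigr].
\end{equation*}
Hence $2S_2(f)=(\Delta f)^2-|\nabla^2 f|^2$ pointwise on $M$.

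Next, I would invoke the classical Bochner formula
\begin{equation*}
\tfrac12 \Delta|\nabla f|^2 = |\nabla^2 f|^2 + \langle \nabla f,\nabla \Delta f\rangle + \operatorname{Ric}(\nabla f,\nabla f),
\end{equation*}
and integrate both sides over $M$. Because $M$ is compact without boundary, the divergence theorem kills $\int_M \Delta|\nabla f|^2$, and integration by parts on the cross term gives $\int_M \langle \nabla f,\nabla \Delta f\rangle = -\int_M (\Delta f)^2$. Substituting these in and rearranging yields
\begin{equation*}
\int_M \operatorname{Ric}(\nabla f,\nabla f) = \int_M \bigl[(\Delta f)^2 - |\nabla^2 f|^2\bigr] = \int_M 2S_2(f),
\end{equation*}
which is exactly the claim.

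There is no real obstacle here; the only subtlety worth stating explicitly is the algebraic identification of $2S_2(f)$ with $(\Delta f)^2-|\nabla^2 f|^2$, since the definition given in the paper is via the characteristic-polynomial generating function rather than directly in terms of the Hessian. Once that identification is made, the lemma is a one-line consequence of Bochner combined with two applications of the divergence theorem.
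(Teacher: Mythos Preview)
Your argument is correct. Note, however, that the paper does not actually supply a proof of this lemma: it is quoted from Reilly \cite{Robert1977} and stated without proof. Your derivation---identifying $2S_2(f)=(\Delta f)^2-|\nabla^2 f|^2$ from the characteristic-polynomial definition and then integrating the Bochner formula with two applications of the divergence theorem---is precisely the standard route to this identity and is essentially how it is obtained in the cited reference.
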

\begin{lem}[\cite{Robert1977}]\label{Robert2}
Let $(M,g)$ be a compact Riemannian manifold. If $f$ is a smooth function on $M$, then $S_2(f)$ can't be constant unless it vanishes.           
\end{lem}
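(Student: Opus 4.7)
The plan is to derive the conclusion from a pointwise algebraic identity for $S_2(f)$ together with two sign arguments, one at the extrema of $f$ and one via the divergence theorem. Writing $S_2(f)$ as the second elementary symmetric polynomial in the eigenvalues $\mu_1,\dots,\mu_n$ of the Hessian $\nabla^2 f$ and applying Newton's identity yields the pointwise relation
\begin{equation*}
2\,S_2(f)=(\Delta f)^2-|\nabla^2 f|^2.
\end{equation*}
The goal is to show that if $S_2(f)\equiv c$ is a constant, then $c\ge 0$ and simultaneously a strict $c>0$ is impossible, forcing $c=0$.

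For the lower bound I would use the compactness of $M$ to pick points $p,q\in M$ where $f$ attains its maximum and minimum respectively. At $p$ the Hessian $\nabla^2 f(p)$ is negative semi-definite, so every eigenvalue $\mu_i(p)\le 0$; consequently every pairwise product $\mu_i(p)\mu_j(p)\ge 0$, which gives $S_2(f)(p)\ge 0$. The identical argument at $q$, where $\mu_i(q)\ge 0$, again gives $S_2(f)(q)\ge 0$. Under the constancy hypothesis this yields $c\ge 0$.

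To exclude $c>0$ I would argue as follows. If $c>0$, the pointwise identity forces
\begin{equation*}
(\Delta f)^2 = |\nabla^2 f|^2+2c \ge 2c>0
\end{equation*}
everywhere on $M$, so $\Delta f$ never vanishes. By continuity together with the connectedness of $M$ (built into the paper's standing hypotheses), $\Delta f$ would then have constant sign, contradicting the elementary identity $\int_M \Delta f\,dV=0$ valid for any smooth function on a compact manifold without boundary. Hence $c=0$, which is the claim.

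I do not foresee an essential obstacle. The only mild subtlety is that at the extrema the Hessian is only semi-definite rather than strictly definite, so one cannot upgrade to a strict $S_2(f)>0$ at $p$ or $q$; but the weaker bound $S_2(f)\ge 0$ is exactly what the argument needs. The remaining ingredients---the Newton identity, the divergence theorem, and the connectedness of $M$---are entirely standard, and notably the argument does not require invoking Lemma~\ref{Robert1} at all.
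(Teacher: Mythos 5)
Your argument is correct. Note first that the paper itself offers no proof of this lemma: it is imported verbatim from Reilly \cite{Robert1977} as a black box, so there is no in-paper argument to compare yours against. What you have written is a complete, self-contained, elementary proof. The Newton identity $2S_2(f)=(\Delta f)^2-|\nabla^2 f|^2$ is the correct expansion of the second elementary symmetric function of the Hessian eigenvalues; the evaluation at a maximum of $f$ (which exists by compactness and is an interior point since $\partial M=\emptyset$) correctly yields $c\ge 0$ from the negative semi-definiteness of $\nabla^2 f$ there; and the exclusion of $c>0$ via the nonvanishing of $\Delta f$, connectedness, and $\int_M\Delta f\,dV=0$ is airtight. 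Your closing remark is also accurate: you never need Lemma~\ref{Robert1}, whereas a reader reconstructing Reilly's result from the ingredients the paper does state would more naturally combine Lemma~\ref{Robert1} with Bochner-type integral identities; your route is more elementary in that it avoids the Ricci curvature term entirely and isolates exactly where compactness without boundary enters (existence of an extremum and the vanishing of $\int_M\Delta f$). The one subtlety you flag --- that the Hessian at the extremum is only semi-definite --- is indeed harmless for the weak inequality $S_2(f)\ge 0$ you need.
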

The following relations hold for gradient Ricci solitons:
\begin{lem}[\cite{Cao2006, Zhang2009}]
Let $(M, g,  f)$ be a gradient Ricci soliton. Then the following holds:
\begin{equation}\label{eq9}
R+\Delta f=\lambda n,
\end{equation}
\begin{equation}\label{eq10}
\langle\nabla f, \nabla R\rangle=2 Ric(\nabla f,\nabla f).
\end{equation}
\end{lem}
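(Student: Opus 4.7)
The plan is to derive both identities directly from the defining equation \eqref{Grad Ricci soliton} of a gradient Ricci soliton, namely $\nabla^2 f + \mathrm{Ric} = \lambda g$, using only tracing, the contracted second Bianchi identity, and the standard commutator formula for covariant derivatives on a function.

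First I would prove \eqref{eq9} by taking the metric trace of \eqref{Grad Ricci soliton}. Since $\mathrm{tr}_g(\nabla^2 f) = \Delta f$, $\mathrm{tr}_g(\mathrm{Ric}) = R$, and $\mathrm{tr}_g(\lambda g) = \lambda n$, this immediately yields $\Delta f + R = \lambda n$. This step is one line and serves as the base identity.

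Next, to prove \eqref{eq10}, I would take the divergence of both sides of the soliton equation. On one hand, $\mathrm{div}(\lambda g) = 0$, so $\mathrm{div}(\nabla^2 f) + \mathrm{div}(\mathrm{Ric}) = 0$. The contracted second Bianchi identity supplies $\mathrm{div}(\mathrm{Ric}) = \tfrac{1}{2}\,dR$. For the Hessian term I would use the Ricci identity on the function $f$: interchanging covariant derivatives via the curvature tensor gives
\begin{equation*}
\nabla^{j}\nabla_{j}\nabla_{i}f = \nabla_{i}(\Delta f) + R_{ij}\nabla^{j}f.
\end{equation*}
Combining these two pieces produces $\nabla_{i}(\Delta f) + R_{ij}\nabla^{j}f + \tfrac{1}{2}\nabla_{i}R = 0$. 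Substituting $\nabla_{i}(\Delta f) = -\nabla_{i}R$, which follows from differentiating \eqref{eq9}, yields the vector identity $R_{ij}\nabla^{j}f = \tfrac{1}{2}\nabla_{i}R$. Contracting this with $\nabla^{i}f$ delivers $\langle\nabla f,\nabla R\rangle = 2\,\mathrm{Ric}(\nabla f,\nabla f)$.

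The computations are routine, so the main place to be careful is bookkeeping with sign and index conventions when applying the Ricci identity to switch the order of $\nabla^{j}\nabla_{j}\nabla_{i}f$; a wrong sign there would flip the curvature contribution and destroy the cancellation that produces the factor $\tfrac{1}{2}$ needed in the final contraction. Apart from this, no further analytic input (no compactness, no completeness, no curvature bound) is required, so the lemma holds for an arbitrary gradient Ricci soliton exactly as stated.
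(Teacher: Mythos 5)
Your argument is correct and is exactly the standard derivation found in the cited sources (Cao, Zhang): the paper itself gives no proof of this lemma, simply quoting it from the literature. Both steps check out — the trace gives \eqref{eq9}, and combining the contracted second Bianchi identity with the commutation formula $\nabla^{j}\nabla_{j}\nabla_{i}f=\nabla_{i}(\Delta f)+R_{ij}\nabla^{j}f$ and the differentiated form of \eqref{eq9} yields the pointwise identity $R_{ij}\nabla^{j}f=\tfrac{1}{2}\nabla_{i}R$, of which \eqref{eq10} is the contraction with $\nabla^{i}f$.
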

\begin{proof}[\bf Proof of Theorem \ref{Th6}]
As $S_2(f)$ is a constant, by Lemma \ref{Robert2}, we infer that $S_2(f)$ vanishes. Therefore, by Lemma \ref{Robert1}, we have
\begin{equation}\label{eq15}
\int_{M}Ric(\nabla f, \nabla f)=0.
\end{equation}
Using (\ref{eq10}) in (\ref{eq15}), we obtain
\begin{equation*}
\int_{M}\langle\nabla f, \nabla R\rangle=0.
\end{equation*}
Now, by divergence theorem, we get
\begin{equation}\label{eq16}
\int_{M}R \Delta f=0.
\end{equation}
Again, as $\lambda$ is a constant, we have
\begin{equation}\label{eq17}
\int_{M} n\lambda \Delta f=0.
\end{equation}
Thus, combining (\ref{eq16}) and (\ref{eq17}), we obtain
\begin{equation}\label{eq18}
\int_{M} (n\lambda-R) \Delta f=0.
\end{equation}
The above equation (\ref{eq18}) together with equation (\ref{eq9}) yields
\begin{equation*}
\int_{M}(\Delta f)^2=0.
\end{equation*}
It follows that $\Delta f=0$. Therefore, $f$ is harmonic in a compact Riemannian manifold, hence $f$ is constant. This completes the proof. 
\end{proof}

\begin{proof}[\bf Proof of Theorem \ref{Th5}]
From (\ref{eq9}) and Bochner formula, we get
$$\frac{1}{2}\Delta|\nabla f|^2=|\nabla^2 f|^2-\langle\nabla f, \nabla R\rangle+Ric(\nabla f, \nabla f).$$
Now, using equation (\ref{eq10}), we obtain
\begin{eqnarray}\label{eq13}
\nonumber \frac{1}{2}\Delta|\nabla f|^2 &=& |\nabla^2 f|^2-2Ric(\nabla f, \nabla f)+Ric(\nabla f, \nabla f)\\
&=& |\nabla^2 f|^2- Ric(\nabla f,\nabla f).
\end{eqnarray}
Again, equation (\ref{eq15}) and (\ref{eq13}) together yield
\begin{equation*}
\int_{M} |\nabla ^2 f|^2=0.
\end{equation*}
It follows that $\nabla^2 f$ vanishes. As $M$ is compact, hence $f$ is constant. This completes the proof of the theorem. 
\end{proof}

\begin{proof}[\bf Proof of Theorem \ref{Th10}]
As $S_2(f)$ is a constant, by Lemma \ref{Robert2}, we get that $S_2(f)$ vanishes. Therefore, by Lemma \ref{Robert1}, we have
\begin{equation}\label{eq20}
\int_{M}Ric(\nabla f, \nabla f)=0.
\end{equation}
The equation (\ref{eq20}) together with Theorem \ref{Th8}, concludes the result.  
\end{proof}

Setting $l=r_{conj}(M)$, we consider any geodesic ball $B(x,l)$, $x\in M$, and any $q\in\partial B(x,l)$, there exists an arc length parameter $\sigma : [0,l] \rightarrow M$, which is the shortest geodesic connecting $x$ and $q$. Then, we have stated the following lemmas, which will be used in the sequel:
\begin{lem}[\cite{Zhu2022}] \label{Zhup1.4}
Let $(M,g)$ be a complete non-compact Riemannian manifold. Suppose that $B=B(p,r)\subset M$ is a geodesic ball with center at $p$ and radius $r>0$ and $B_{-l}=B(p,r-l)$. If $Ric\geq 0$, then
\begin{equation*}
\int_{B(p,r-l)}{ R }\leq n(n-1)\left(\frac{\pi}{l}\right)^2 Vol(B(p,r)).
\end{equation*}
\end{lem}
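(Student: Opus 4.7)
The plan is to combine a Myers-type index-form estimate along conjugate-free geodesics with a Fubini-plus-geodesic-flow argument on the unit tangent bundle. Since $l=r_{conj}(M)$ by construction, every unit-speed geodesic of length $l$ has no conjugate points in $(0,l)$, so the index form is nonnegative on proper variations vanishing at the endpoints.

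First I would plug into the index form the family of variation fields $V_i(t)=\sin(\pi t/l)E_i(t)$, where $\{E_i\}_{i=1}^{n-1}$ is a parallel orthonormal frame along a unit-speed geodesic $\sigma:[0,l]\to M$ orthogonal to $\sigma'$, sum over $i$, and use $\sum_i R(E_i,\sigma',E_i,\sigma')=Ric(\sigma',\sigma')$. A routine computation (using $\int_0^l \cos^2(\pi t/l)\,dt=l/2$) produces the weighted integral Ricci estimate
\begin{equation*}
\int_0^l \sin^2\!\left(\frac{\pi t}{l}\right)Ric(\sigma'(t),\sigma'(t))\,dt \;\leq\; \frac{(n-1)\pi^2}{2l},
\end{equation*}
valid for every such $\sigma$.

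Next I would integrate this inequality over the unit tangent bundle $SB(p,r)=\{(q,v):q\in B(p,r),\,|v|_g=1\}$ against the Liouville measure $dq\,dv$, apply Fubini to pull the $t$-integral outside, and note that for each fixed $t\in[0,l]$ the geodesic flow $\phi_t$ is measure preserving on $SM$, so
\begin{equation*}
\int_{SB(p,r)} Ric(\sigma'_{q,v}(t),\sigma'_{q,v}(t))\,dq\,dv \;=\; \int_{\phi_t(SB(p,r))} Ric(v',v')\,dq'\,dv'.
\end{equation*}
The triangle inequality gives the one-sided inclusion $SB(p,r-l)\subset\phi_t(SB(p,r))$ for all $t\in[0,l]$, and the hypothesis $Ric\geq 0$ allows me to discard the complement. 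Using the sphere-average identity $\int_{S_{q'}M}Ric(v',v')\,dv'=\frac{Vol(S^{n-1})}{n}R(q')$ together with $\int_0^l\sin^2(\pi t/l)\,dt=l/2$ then yields the desired inequality after cancelling the common factor $Vol(S^{n-1})$.

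I expect the main obstacle to be in correctly exploiting the one-sided inclusion $SB(p,r-l)\subset\phi_t(SB(p,r))$: it is precisely this inclusion, together with $Ric\geq 0$, that makes the sphere-averaging step pass through without a defect and lets the $\sin^2$ weight integrate cleanly. Minor care is also required when the supremum defining $r_{conj}(M)$ is only attained in the limit, which is handled by applying the index-form inequality on $[0,l-\varepsilon]$ and sending $\varepsilon\to 0$ at the very end.
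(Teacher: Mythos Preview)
The paper does not give its own proof of this lemma: it is quoted verbatim from \cite{Zhu2022} and used as a black box, with the sentence ``Setting $l=r_{conj}(M)$'' just before it fixing the meaning of $l$. Your argument is correct and is precisely the classical Green--Berger averaging technique that Zhu employs: the index-form inequality on conjugate-point-free segments, integration over the unit tangent bundle against Liouville measure, invariance under the geodesic flow, the inclusion $SB(p,r-l)\subset\phi_t(SB(p,r))$ via the triangle inequality, and the sphere-average identity $\int_{S_qM}Ric(v,v)\,dv=\tfrac{Vol(S^{n-1})}{n}R(q)$. The only remark is that your proposal phrases the treatment of a possible conjugate point at distance exactly $l$ as an $\varepsilon$-limit; since the lemma is stated as a non-strict inequality and all integrands are nonnegative by $Ric\ge 0$, this passage to the limit is routine.
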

\begin{lem}[\cite{Zhu2022}] \label{Zhul2.7}
Let $(M,g)$ be a complete non-compact Riemannian manifold with a pole at $p$ and $Ric\geq 0$. Then for any geodesic ray $\sigma(t)$ with $\sigma(0)=p,$ the following holds
\begin{equation*}
\frac{1}{r}\int_{0}^{r}{t^2 Ric(\sigma'(t),\sigma'(t))}dt\leq {n-1}.
\end{equation*}
\end{lem}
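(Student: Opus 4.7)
The plan is to deduce the integral bound from a Riccati-type inequality for $m(t) := \Delta \rho(\sigma(t))$, where $\rho(x) = d(p, x)$ is the distance from the pole.

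Since $M$ has a pole at $p$, the exponential map $\exp_p$ is a diffeomorphism, so $\rho$ is smooth on $M \setminus \{p\}$ with $|\nabla \rho| \equiv 1$ and $\nabla \rho|_{\sigma(t)} = \sigma'(t)$. Applying the Bochner formula to $\rho$ and restricting to $\sigma$, the constancy of $|\nabla \rho|^2$ reduces it to
\[
m'(t) + |\nabla^2 \rho|^2|_{\sigma(t)} + Ric(\sigma'(t), \sigma'(t)) = 0.
\]
Because $\nabla^2 \rho$ annihilates the unit field $\nabla \rho$, it has rank at most $n-1$, and Cauchy--Schwarz gives $|\nabla^2 \rho|^2 \geq m^2/(n-1)$, yielding the Riccati-type inequality
\[
m'(t) + \frac{m(t)^2}{n-1} + Ric(\sigma'(t), \sigma'(t)) \leq 0.
\]

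The central move is to multiply by $t^2$ and exploit the elementary identity $2tm - t^2 m^2/(n-1) = (n-1) - (tm - (n-1))^2/(n-1) \leq n-1$, which gives
\[
\frac{d}{dt}\bigl(t^2 m(t)\bigr) \leq (n-1) - t^2 Ric(\sigma'(t), \sigma'(t)).
\]
Integrating from $0$ to $r$, and using that $t^2 m(t) \to 0$ as $t \to 0^+$ (because $m(t) \sim (n-1)/t$ from the Taylor expansion of the Jacobi shooting matrix along $\sigma$), I obtain
\[
r^2 m(r) + \int_0^r t^2 Ric(\sigma'(t), \sigma'(t))\, dt \leq (n-1)\, r.
\]

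To finish I discard the term $r^2 m(r)$ and divide by $r$, so the step I expect to be the main obstacle is justifying the positivity $m(r) = \Delta \rho(\sigma(r)) \geq 0$. Under the given hypotheses this follows from the matrix Riccati equation $S' + S^2 + R(\cdot, \sigma')\sigma' = 0$ satisfied by the shape operator $S$ of the geodesic spheres $\partial B(p, t)$: the pole assumption ensures $S$ is defined for all $t > 0$, the asymptotic $S(t) \sim t^{-1} I$ near $t=0$ is positive definite, and under $R \geq 0$ a comparison with the Euclidean profile $t^{-1} I$ preserves positive semi-definiteness, so $m = \mathrm{tr}\, S \geq 0$ along $\sigma$, completing the argument.
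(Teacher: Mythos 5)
The paper does not prove this lemma at all: it is quoted verbatim from Zhu \cite{Zhu2022} and used as a black box, so your argument is not competing with an in-paper proof but supplying one. What you wrote is the standard Riccati-comparison derivation, and it is essentially sound: with a pole, $\rho=d(p,\cdot)$ is smooth off $p$, Bochner plus Cauchy--Schwarz on the traceless part of $\nabla^2\rho$ gives $m'+m^2/(n-1)+Ric(\sigma',\sigma')\le 0$, and the weight $t^2$ together with the elementary bound $2tm-t^2m^2/(n-1)\le n-1$ and the asymptotic $t^2m(t)\to 0$ yields $r^2m(r)+\int_0^r t^2\,Ric(\sigma',\sigma')\,dt\le (n-1)r$. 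This is almost certainly the same mechanism as in Zhu's paper, so there is no genuinely different route to compare; the value of your write-up is that it makes the role of the pole hypothesis explicit (it is what guarantees $\rho$ is smooth along all of $\sigma$ and that the shape operator exists for all $t>0$).

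The one place where your justification, as written, is not right is the final positivity step. Under $R\ge 0$ the matrix Riccati comparison with the Euclidean profile gives the \emph{upper} bound $S(t)\le t^{-1}I$; it does not by itself "preserve positive semi-definiteness," and indeed $Ric\ge 0$ alone does not force $\Delta\rho\ge 0$ on a general manifold. The correct argument, which uses exactly the ingredients you already listed, is a blow-up argument: if $m(t_0)=-c<0$ for some $t_0>0$, then the scalar inequality $m'\le -m^2/(n-1)$ forces $m(t)\to-\infty$ by time $t_0+(n-1)/c$, which is impossible because the pole assumption rules out conjugate points along $\sigma$ and hence $m=\Delta\rho$ is finite and smooth for all $t>0$. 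With that substitution the step $m(r)\ge 0$ is secure and the proof is complete.
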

\begin{proof}[\bf Proof of Theorem \ref{Th4}]
Tracing the gradient Ricci soliton equation (\ref{Grad Ricci soliton}), we have
\begin{equation}\label{eq7}
R+\Delta f=\lambda n.
\end{equation}
Since, $Ric\geq 0$, hence from (\ref{eq7}) and Lemma \ref{Zhup1.4}, we obtain
\begin{equation*}
\int_{B(p,r-l)}(\lambda n-\Delta f)\leq n(n-1)\left(\frac{\pi}{l}\right)^2 Vol(B(p,r)).
\end{equation*}
As $f$ is superharmonic, $\Delta f\leq 0$ , thus
\begin{equation}\label{eq8}
(\lambda n) Vol(B(p,r-l))\leq n(n-1)\left(\frac{\pi}{l}\right)^2 Vol(B(p,r)).
\end{equation}
By volume comparison theorem, we have (see, \cite{Zhu2022})
$$\lim\limits_{r\rightarrow \infty}\frac{Vol(B(p,r-l))}{Vol(B(p,r))}=1.$$
Therefore, from (\ref{eq8}), we get
\begin{equation*}
\lambda\leq \frac{(n-1)\pi^2}{l^2}.
\end{equation*}
This implies
\begin{equation*}
l\leq \pi \sqrt{\frac{n-1}{\lambda}}.
\end{equation*}

\end{proof}

\begin{proof}[\bf Proof of Theorem \ref{Th2}] We have (see \cite{Zhu2022}, p. 239)
\begin{equation*}
\int_{0}^{l}{sin^2\left(\frac{\pi}{l}t\right) Ric(\sigma'(t),\sigma'(t))}dt\leq \frac{(n-1)\pi^2}{2l}.
\end{equation*}
Using Ricci soliton equation (\ref{Ricci soliton}), we get
\begin{equation}\label{eq1}
\int_{0}^{l}{sin^2\left(\frac{\pi}{l}t\right) \{\lambda-\frac{1}{2}(\mathcal{L}_Xg)(\sigma'(t),\sigma'(t))\}}dt\leq \frac{(n-1)\pi^2}{2l}.
\end{equation}
For all $Y \in \chi(M)$, $\nabla_Y X=\phi Y$, as $X$ is concircular, where $\phi$ is a smooth function on $M$. Therefore
\begin{equation}\label{eq2}
\frac{1}{2}(\mathcal{L}_Xg) (\sigma'(t),\sigma'(t))=\phi g(\sigma'(t),\sigma'(t)).
\end{equation}
By (\ref{eq1}) and (\ref{eq2}), we get
\begin{equation}\label{eq3}
\int_{0}^{l}{sin^2\left(\frac{\pi}{l}t\right) (\lambda-\phi)}dt\leq \frac{(n-1)\pi^2}{2l}.
\end{equation}
Again, combining Ricci soliton equation (\ref{Ricci soliton}) and equation (\ref{eq2}), we have
\begin{equation*}
 Ric(\sigma'(t),\sigma'(t))=(\lambda-\phi )g(\sigma'(t),\sigma'(t)).
\end{equation*}
Since $n\geq 3$, the last relation implies that $M$ is Einstein (see \cite{chen2015}, p. 1543). Hence, $(\lambda-\phi)$ is a constant.
Therefore, from equation (\ref{eq3}), we get
\begin{equation*}
\frac{l}{2}(\lambda-\phi)\leq \frac{(n-1)\pi^2}{2l}.
\end{equation*}
This implies
\begin{equation*}
l\leq \pi\sqrt{\frac{n-1}{\lambda-\phi}}.
\end{equation*}
\end{proof}

\begin{proof}[\bf Proof of Theorem \ref{Th3}]
We have (see \cite{Zhu2022}, p. 239)
\begin{equation}\label{eq4}
\int_{0}^{l}{sin^2\left(\frac{\pi}{l}t\right) Ric(\sigma'(t),\sigma'(t))}dt\leq \frac{(n-1)\pi^2}{2l}.
\end{equation}
Using $Ric + \nabla^2f \geq\lambda g$ in (\ref{eq4}), we get
\begin{equation*}
\int_{0}^{l}{sin^2\left(\frac{\pi}{l}t\right) \{\lambda-\nabla^2 f(\sigma'(t),\sigma'(t))\}}dt\leq \frac{(n-1)\pi^2}{2l}.
\end{equation*}
As $f''(\sigma(t))\leq k$ , we have
\begin{equation}\label{eq6}
\int_{0}^{l}{sin^2\left(\frac{\pi}{l}t\right) (\lambda-k)}dt\leq \frac{(n-1)\pi^2}{2l}.
\end{equation}
Hence, equation (\ref{eq6}) entails
\begin{equation*}
\frac{l}{2}(\lambda-k)\leq \frac{(n-1)\pi^2}{2l}.
\end{equation*}
This implies
\begin{equation*}
l\leq \pi\sqrt{\frac{n-1}{\lambda-k}}.
\end{equation*}
\end{proof}

\begin{proof}[\bf Proof of Theorem \ref{Th1}]
For any $r>0$, the Lemma \ref{Zhul2.7} together with gradient Ricci soliton equation (\ref{Grad Ricci soliton}), yields
\begin{equation*}
\frac{1}{r}\int_{0}^{r}t^2\left\{\lambda-\nabla^2 f (\sigma',\sigma')\right\}dt\leq {n-1}.
\end{equation*}
Thus
\begin{equation}\label{eq23}
\frac{1}{r}\left\{\frac{\lambda r^3}{3}-\int_{0}^{r} t^2 f'' (\sigma(t))dt\right\}\leq {n-1}.
\end{equation}
Now, a simple calculation gives
\begin{equation}\label{eq24}
\int_{0}^{r} t^2f'' (\sigma(t))dt=r^2f' (\sigma(r))-2r f(\sigma(r))+2\int_{0}^{r} f(\sigma(t))dt.
\end{equation}
In view of (\ref{eq23}) and (\ref{eq24}), we obtain
\begin{equation*}
\frac{1}{r}\left\{\frac{\lambda r^3}{3}-r^2f' (\sigma(r))+2r f(\sigma(r))-2\int_{0}^{r} f(\sigma(t))\right\}dt\leq {n-1}.
\end{equation*}
This implies
\begin{equation*}
\frac{\lambda}{3}-\frac{1}{r}f' (\sigma(r))+\frac{2}{r^2} f(\sigma(r))-\frac{2}{r^3}\int_{0}^{r} f(\sigma(t))dt\leq \frac{n-1}{r^2}.
\end{equation*}
As $f$ is at most linear along $\sigma$, taking limit as $r \rightarrow \infty $, we obtain
$$\lambda\leq 0.$$
It follows that the Ricci soliton is non-shrinking. 
\end{proof}

\section*{Acknowledgment}
 The second author gratefully acknowledges to the CSIR(File No.:09/025(0282)/2019-EMR-I), Govt. of India for the award of Senior Research Fellow.

 \section{Data Availability Statement}
 Our manuscript has no associated data.
 \section{Declarations: Conflict of Interest}
 The authors declare that they have no conflict of interest.

\end{document}